\newtheorem{defn}{Definition}[section]
\newtheorem{thm}[defn]{Theorem}
\newcommand {\ZZ}{{\mathds Z}}
\newcommand {\KK}{{\mathcal K}}
\newcommand {\XX}{{\mathcal X}}
\newcommand {\A}{{\mathcal A}}
\newcommand {\G}{{\mathds  G}}
\newcommand {\CC}{{\mathcal C}}
\newcommand {\Q}{{\mathds Q}}
\newcommand {\R}{{\mathds R}}
\newcommand {\OO}{{\mathcal O}}
\newcommand {\HH}{{\mathcal  H}}
\newcommand {\M}{{\mathcal M}}
\newcommand {\CP}{{\mathds P}}
\newcommand {\D}{{\mathcal D}}
\newcommand {\QL}{{{\mathds Q}_{\ell}}}
\newcommand {\HCP}{{\hat{\mathbb P}}}
\newcommand{\barf}{{\bar{f}}}
\def\div{\operatorname{div}}
\def\dim{\operatorname{dim}}
\def\ord{\operatorname{ord}}
\title{Abelian Surfaces and the non-Archimedian Hodge-$\D$-conjecture - the semi-stable case. }
\author{Ramesh Sreekantan\\Indian Statistical Institute, Bangalore. }
\begin{document}
\baselineskip=15pt
\maketitle
\begin{abstract}

\end{abstract}

If $X$ is a smooth projective variety over $\R$, the Hodge $\D$-conjecture of Beilinson asserts the surjectivity of the regulator map to Deligne cohomology with real coefficients. It is known to be false in general but is true in some special cases like Abelian surfaces and $K3$-surfaces -- and still expected to be true when the variety is defined over a number field. We prove an analogue of this for Abelian surfaces  at a non-Archimedean place where the surface has bad reduction. Here the Deligne cohomology is replaced by a certain Chow group of the special fibre. The case of good reduction is harder and was first studied by Spiess \cite{spie} in the case of products of elliptic curve and by me in general \cite{sree4}. The case of bad reduction was also studied by me in \cite{sree2}. {\bf MSC Classification (2010) 19E15(11G25 14C15 14G20)}

\section{Introduction}

\subsection{The Hodge-$\D$-conjecture for Abelian surfaces}

Let $A$  be an Abelian surface over a $p$-adic local field $K$ and  $\A$ be a semi-stable model over the ring of integers $\OO=\OO_K$. Let  $\A_v$ be the  special fibre over the closed point $v$ - which we assume is semi-stable, namely a union of divisors with normal crossings whose components are smooth.  The aim of this paper is to prove a non-Archimedean analogue of the Hodge-$\D$-conjecture for such Abelian surfaces. 

This conjecture states that the map 
$$CH^2(A,1) \otimes \Q \stackrel{\partial}{\longrightarrow}  PCH^1(\A_v) \otimes \Q$$ 
is surjective. Here $PCH^1(\A_v)$ is a certain sub-quotient of the Chow group of the special fibre.  This group  has the property that 
$$\dim_{\Q} PCH^1(\A_v)\otimes \Q=-\ord_{s=1} L_p (H^2(A),s)$$
where $L_p(H^2(A),s)$ is the local $L$-factor at $p$. This group can hence  be viewed as a $p$-adic version of the Real Deligne cohomology -- which has that property with respect to the Archimedean factor -- and hence the map $\partial$ can be viewed as a $p$-adic version of the regulator map. 

When $\A_v$ is smooth, that is, $p$ is a prime of good reduction, the group $PCH^1(\A_v)$ is simply $CH^1(\A_v)$. This case was studied in \cite{spie} and \cite{sree4}. When $A$ is a product of (non-isogenous) elliptic curves  and $p$ is a prime of semi-stable reduction for both this was studied in \cite{sree2}. This paper essentially closes the chapter --- proving it in the remaining  case of semi-stable reduction of simple Abelian surfaces. 

Note that the group $CH^2(\A,1)\otimes \Q$ has many diffrent avatars - it is the same as the $\KK$-cohomology group $H^1(\A,\KK_2)$ and the motivic cohomology group $H^2_{\M}(\A,\Q(2))$.

\vspace{\baselineskip}

 \noindent {\em Acknowledgements:} This work was largely done when the author was visiting CRM in Montreal some years ago. I would like to thank CRM for their hospitality and ISI for their support while this work was done.

\section{The target space of the boundary map}

Let $X$ be a smooth proper variety over a local field $K$ and $\OO$
the ring of integers of  $K$ with closed point $v$ and generic point
$\eta$.

By  a model $\XX$ of $X$ we mean a flat proper scheme $\XX
\rightarrow Spec(\OO)$ together with an isomorphism of the generic
fibre $X_{\eta}$ with $X$. Let $Y$ be the special fibre $\XX_v=\XX
\times Spec(k(v))$ and let $i:Y \hookrightarrow \XX$ denote the inclusion map. 
We will always also make the assumption that the
model is strictly semi-stable, which means that it is a regular
model and the fibre $Y$ is a divisor with normal crossings whose
irreducible components are smooth, have multiplicity one and
intersect transversally.

\subsection{Consani's Double Complex}

In \cite{cons}, Consani defined a double complex of Chow groups of
the components of the special fibre with a monodromy operator $N$,
following the work of Steenbrink \cite{stee} and
Bloch-Gillet-Soul\'{e} \cite{bgs}. Using this complex she was able
to relate the higher Chow group of the special fibre at a
semi-stable prime to the regular Chow groups of the components. This
relation is what is used in defining the group $PCH$.

Let $Y=\bigcup_{i=1}^{t} Y_i$ be the special fibre of dim $n$ with
$Y_i$ its irreducible components. For $I \subset \{1,\dots,t\}$,
define
$$Y_I= \cap_{i \in I} Y_i$$
Let $r=|I|$ denote the cardinality of $I$. Define
$$Y^{(r)}:=\begin{cases} \XX & \text{ if } r=0 \\ \coprod_{|I|=r} Y_{I}& \text{ if } 1 \leq r \leq n \\
\emptyset & \text { if } r>n \end{cases}$$
For $u$ and $t$ with $1 \leq u \leq t < r$ define the map
$$\delta(u):Y^{(t+1)} \rightarrow Y^{(t)}$$
as       follows.        Let       $I=(i_1,\dots,i_{t+1})$ with
$i_1<i_2<...<i_{t+1}$. Let $J=I-\{i_u\}$. This gives an embedding
$Y_I \rightarrow Y_J$. Putting these  together induces the map
$\delta(u)$. Let $\delta(u)_*$  and $\delta(u)^*$ denote the
corresponding maps on Chow homology  and cohomology respectively.
They further  induce the Gysin and restriction maps on the Chow
groups.

Define
$$\gamma:=\sum_{u=1}^{r+1} (-1)^{u-1} \delta(u)_*$$

and
$$ \rho:=\sum_{u=1}^{r+1} (-1)^{u-1} \delta(u)^*$$
These maps have the properties that
\begin{itemize}
\item $\gamma^2=0$ \item $\rho^2=0$ \item $\gamma \cdot \rho +
\rho \cdot \gamma =0$
\end{itemize}

\subsection{The group PCH} Let $a,q$ be two integers with $q-2a >0$.
$$PCH^{q-a-1}(Y,q-2a-1):= \begin{cases} \displaystyle{ \frac{Ker(i^*i_*:CH_{n-a}(Y^{(1)}) \rightarrow CH^{a+1}(Y^{(1)}))}{Im(\gamma:CH_{n-a}(Y^{(2)}) \rightarrow CH_{n-a}(Y^{(1)}))}\otimes \Q }& \text{if $q-2a=1$}\\
\\   \displaystyle {\frac{Ker(\gamma:CH_{n-(q-a-1)}(Y^{(q-2a)}) \rightarrow CH_{n-(q-a-1)}(Y^{(q-2a-1)}))}
{Im(\gamma:CH_{n-(q-a-1)}(Y^{(q-2a+1)}) \rightarrow
CH_{n-(q-a-1)}(Y^{(q-2a)}))}\otimes \Q } & \text {if $q-2a>1$}
\end{cases}
$$
Here $n$ is the dimension of $Y$. Note that if $q-2a>1$ and $Y$ is
non-singular, this group is $0$, while if $Y$ is singular and
semi-stable, the Parshin-Soul\'{e} conjecture implies that this
group is $CH^{q-a-1}(Y,q-2a-1)\otimes \Q$. If $q-2a=1$ and $Y$ is
non-singular, the group is $CH^a(Y) \otimes \Q$. Our interest is in
the remaining case, namely when $q-2a=1$ and $Y$ is singular.

The `Real' Deligne cohomology has the property that its dimension is
the order of the pole  of the Archimedean factor of the $L$-function
at a certain point on the left of the critical point. The group
$PCH^1(Y)$ is expected to have a  similar property. Let $F^*$ be the geometric
Frobenius and $N(v)$ the number of elements of $k(v)$. The local
$L$-factor of the $(q-1)^{st}$-cohomology group is then
$$L_v(H^{q-1}(X),s)=(det(\textsl{I}-F^*N(v)^{-s}|H^{q-1}(\bar{X},\QL)^{I}))^{-1}$$
\begin{thm}[Consani] Let $v$ be a place of semistable reduction.
Assuming the weight-monodromy conjecture, the Tate conjecture for
the components   and the injectivity of the cycle class map on the
components $Y_I$, the Parshin-Soul\'{e} conjecture and that $F^*$
acts semisimply on $H^*(\bar{X},\QL)^{I}$. we have
$$\dim_{\Q} PCH^{q-a-1}(Y,q-2a-1)=-\ord_{s=a} L_v(H^{q-1}(X),s):=d_v$$
\end{thm}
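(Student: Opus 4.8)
The plan is to compute both sides of the claimed equality independently and match them. On the spectral side, recall that Steenbrink's theory gives a weight spectral sequence converging to $H^{q-1}(\bar X,\QL)$ whose $E_1$-terms are built from the cohomology of the strata $Y_I$, with differentials assembled from Gysin and restriction maps; the monodromy operator $N$ acts on this picture and, under the weight-monodromy conjecture, induces isomorphisms between graded pieces symmetric about the middle weight. The fixed part $H^{q-1}(\bar X,\QL)^I$ under inertia is, by Grothendieck's local monodromy theorem, the kernel of $N$, so its Frobenius eigenvalues of weight matching $s=a$ contribute to $\ord_{s=a}L_v$. First I would pin down which weight-graded piece of $\Ker N$ is responsible for a pole at $s=a$: a pole occurs exactly where Frobenius has eigenvalue $N(v)^a$ on $H^{q-1}$, i.e. on the weight-$2a$ part, and on that part the Tate conjecture plus injectivity of the cycle class map on the $Y_I$ let us replace ``Frobenius-fixed cohomology classes'' by ``algebraic cycles'', i.e. by Chow groups of the components.

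Next I would identify the relevant term of the monodromy-weight spectral sequence. For the case $q-2a=1$, the weight-$2a$ part of $H^{q-1}$ in the kernel of $N$ should be exactly the cohomology of the complex
\[
CH_{n-a}(Y^{(2)}) \stackrel{\gamma}{\longrightarrow} CH_{n-a}(Y^{(1)}) \stackrel{i^*i_*}{\longrightarrow} CH^{a+1}(Y^{(1)})
\]
in the middle, which is precisely $PCH^{q-a-1}(Y,q-2a-1)$ by definition; for $q-2a>1$ one gets instead the cohomology of the single $\gamma$-complex on the $Y^{(r)}$'s, again matching the definition, and here the Parshin--Soulé conjecture is what guarantees the higher Chow group of $Y$ itself is captured by this combinatorial complex. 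The identification of the $E_2$ (equivalently $E_\infty$, using semisimplicity of $F^*$ so that the spectral sequence degenerates appropriately and weights separate cleanly) term with the displayed Chow-group complex is the technical heart: one must track degrees carefully through Steenbrink's indexing, use the hard Lefschetz / weight-monodromy isomorphism to reduce the ``kernel of $N$ in weight $2a$'' to a single explicit graded piece, and then invoke the Tate conjecture for the $Y_I$ (to go from étale cohomology to $CH$) together with injectivity of the cycle map (to go the other way without loss).

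The main obstacle I expect is exactly this bookkeeping-with-teeth step: showing that the Frobenius-fixed, inertia-fixed subspace of $H^{q-1}(\bar X,\QL)$ with the correct weight is, term by term, the $\QL$-linear span of the Chow complex above, and that taking $N$-kernel commutes with passing to the weight filtration in the way needed. This requires the weight-monodromy conjecture to control $N$, semisimplicity of $F^*$ to split the weight filtration Frobenius-equivariantly, and the Tate-conjecture/cycle-map hypotheses to make the translation between cohomology and cycles faithful; any one of these failing breaks the dimension count. Once the $E_\infty$ term is identified with $PCH$, the final equality follows by comparing $\dim_\Q PCH^{q-a-1}(Y,q-2a-1)$ with the multiplicity of the eigenvalue $N(v)^a$ on $H^{q-1}(\bar X,\QL)^I$, which by definition of $L_v(H^{q-1}(X),s)$ as a characteristic-polynomial reciprocal is $-\ord_{s=a}L_v(H^{q-1}(X),s)=d_v$. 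I would close by remarking that all hypotheses are known unconditionally in the cases of interest (abelian surfaces, $n=2$, $q=3$, $a=1$), so the theorem applies to the situation of the introduction.
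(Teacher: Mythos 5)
The paper does not prove this theorem itself --- its ``proof'' is just the citation to Consani, Cor.\ 3.6 --- and your outline is a faithful reconstruction of the strategy of that reference: use the weight spectral sequence and the weight-monodromy conjecture to isolate the weight-$2a$ graded piece of $\Ker N$ inside the inertia invariants, use Frobenius semisimplicity together with the Tate conjecture and injectivity of the cycle class map on the strata $Y_I$ (and Parshin--Soul\'e when $q-2a>1$) to replace Tate classes by Chow groups of the $Y^{(r)}$, and identify the resulting complex with the one defining $PCH^{q-a-1}(Y,q-2a-1)$. So your approach is essentially the same as the paper's (i.e.\ Consani's); the ``bookkeeping-with-teeth'' step you defer is precisely what the cited corollary supplies.
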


\begin{proof} \cite{cons}, Cor 3.6.
\end{proof}
From this point of view the group $PCH^{q-a-1}(Y,q-2a-1)$ can be
viewed as a non-Archimedean analogue of the `Real' Deligne
cohomology. Since the $L$-factor at a prime of good reduction does
not have a pole at $s=a$ when $q-2a>1$, the Parshin-Soul\'{e}
conjecture can be interpreted as the statement that this
non-Archimedean Deligne cohomology has the correct dimension, namely
$0$, even at a prime of good reduction.

As is clear from the definition, the group $PCH$ depends on the
choice of the semi-stable model of $X$. However, Consani's theorem
says that the dimension does not. So to a large extent one can work
with any semi-stable model. Perhaps the correct definition is one
obtained by taking a limit of semi-stable models as in the work of
Bloch, Gillete and Soul\'{e} \cite{bgs} on non-Archimedean Arakelov
theory.

\subsection{Elements of the higher Chow group}

From this point on we specialize to the case when $X$ is a surface
and further $n=2$, $q=3$ and $a=1$. We will be interested in group
$CH^2(X,1)$ and the map to $PCH^1(Y):=PCH^1(Y,0)$. This is related
to the order of the pole of the $L$-function of $H^2(X)$ at $s=1$.
Soon we will further specialize to the case when $X$ is an Abelian surface.

Let $X$ be a surface over a field $K$.  The group  $CH^2(X,1)$ has the following presentation \cite{rama}.
It is generated by formal sums of the type
$$\sum_i (C_i,f_i)$$
where $C_i$ are curves on $X$ and $f_i$ are $\bar{K}$-valued
functions on the $C_i$ satisfying the cocycle condition
$$\sum_i \div{f_i}=0.$$
Relations in this group are give by the tame symbol of pairs of
functions on $X$.

There are some  elements of this group coming from the
product structure
$$CH^1(X_L) \otimes CH^1(X_L,1) \longrightarrow CH^2(X_L,1) \stackrel{Nm^L_K}{\longrightarrow} CH^2(X,1)$$
Here $Nm^L_K$ is the norm map from a finite extension $L$ of $K$. The image of this group as $L$ runs through all finite extensions of $K$ is called the subgroup of {\em decomposable elements},  $CH^2_{dec}(X,1)$ 

A theorem of Bloch \cite{bloc} says that $CH^1(X_L,1)$ is simply
$L^{*}$ where $L$ is the field of definition of $X_L$ so such an
element looks like a sum of elements of the type $(C,a)$ where $C$
is a curve on $X_L$ and $a$ is in $L^*$. The group of {\em indecomposable elements} is the quotient group 
$$CH^2(X,1)_{ind}=CH^2(X,1)/CH^2_{dec}(X,1).$$
In general, it is hard to find elements in this group.  

The group $CH^2(X,1)\otimes \Q$ has several avatars --- it is the  same as the $\KK$-cohomology
group $H^1_{Zar}(X,\KK_{2})\otimes \Q$ and the motivic cohomology
group  $H^3_{\M}(X,\Q(2))$.

\subsection{The boundary map}

The usual Beilinson regulator maps the higher Chow group to the Real
Deligne cohomology. In the non-Archimedean context, it appears that
the boundary map
$$\partial:CH^2(X,1) \longrightarrow PCH^1(Y)$$
plays a similar role. It is defined as follows
$$\partial\left( \sum_i (C_i,f_i)\right)=\sum_i \div_{\bar{C_i}}(\bar{f_i})$$
where $\bar{f_i}$ is the function $f_i$ on the closure $\bar{C_i}$  of $C_i$ in the semi-stable
model $\XX$ of $X$. By the cocycle condition, the `horizontal
divisor', namely, the closure  $\sum_i \overline{\div_{C_i}(f_i)}$ of $\sum_i \div_{C_i}(f_i)$, is $0$ 
and so the boundary is supported on the special fibre. Further, since the
boundary $\partial$ of an element is the sum of divisors of
functions, it lies in $Ker(i^*i_{*})$.

For a decomposable element of the form $(C,a)$ the boundary map is
particularly simple to compute,
$$\partial((C,a))=\ord_v(a)\CC_v$$
Where $\CC_v$ is the special fibre of a model $\CC$. In particular, a cycle in the special fibre which is not the restriction of the closure of a cycle in the generic fibre cannot appear in the boundary of the subgroup of decomposable elements. 

\section{Semi-stable reduction of Abelian surfaces}

\subsection{Types of semi-stable reductions}

If $A$ is an abelian surface over a local field, Kulikov and  Persson-Pinkham classified the possible semi-stable degenerations. For a surface $X$ the {\bf  dual graph} of its special fibre is defined as follows. It is the simplicial complex with one vertex $v_i$ for every component $Y_i$. The simplex $[v_{i_1},\dots, v_{i_k}]$ lies in the simplicial complex if and only if $Y_{i_1}\cap Y_{i_2} \cap \dots \cap Y_{i_k} \neq \emptyset$. 
\begin{thm}[Kulikov-Persson-Pinkham] \cite{Fr-Mo}If $\A$ is the Neron model of an Abelian surface --- so that $K_{\A}=0$ --- the possible semi-stable special fibres are 
\begin{itemize}
\item Type 1 --  $\A_p$ is smooth and the dual graph is a point. \\
\item Type 2 -- $\A_p=\bigcup_i Y_i$ where $\bigcup_i Y_i$ is a cycle of elliptic ruled surfaces such that adjacent surfaces $Y_i \cap Y_j$ intersect at elliptic curves $E_{ij}$. All the elliptic surfaces are isomorphic. The dual graph is $S^1$.\\ 
\item Type 3 -- $\A_p$ is a cycle of rational surfaces, each isomorphic to $\CP^1 \times \CP^1$ such that the dual graph is topologically $S^1 \times S^1$.  The double curves are `$-1$ hexagons' --  there are six components in every double curve and  each component is a $-1$ curve. 
\end{itemize}
\end{thm}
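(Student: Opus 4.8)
The reference is \cite{Fr-Mo}; I indicate how the argument runs. The plan is to pass to a model with trivial relative canonical bundle, extract the local numerical constraints from adjunction, and then read off the three combinatorial possibilities from the toric rank of $A$, realising each geometrically via Raynaud's non-archimedean uniformisation. First I would replace $\A$ by a \emph{Kulikov model}: after a finite base change and a finite sequence of blow-ups and blow-downs supported on the special fibre, one may assume $\A$ is regular, strictly semi-stable, and $K_{\A}\equiv 0$ (for the N\'eron model of an abelian surface this holds on the nose). Write $\A_p=\bigcup_i Y_i$, let $D_i=\sum_{j\neq i}(Y_i\cap Y_j)$ be the double curve on $Y_i$, and let $E=Y_i\cap Y_j$ be a complete double curve. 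Adjunction together with $K_{\A}=0$ gives $K_{Y_i}+D_i\sim 0$, so each $(Y_i,D_i)$ is an anticanonical pair, and applying adjunction once more to $E\subset Y_i$,
\[
2g(E)-2 \;=\; (K_{Y_i}+E)\cdot E \;=\; -(D_i-E)\cdot E \;=\; -t_E,
\]
where $t_E$ is the number of triple points on $E$ (here $K_{Y_i}=-D_i$ and $(D_i-E)\cdot E$ counts precisely those triple points). Hence $g(E)\le 1$, and a double curve is either empty, or elliptic with no triple point, or rational with exactly two triple points. Combined with the triple point formula $(E^2)_{Y_i}+(E^2)_{Y_j}+t_E=0$, these are the only local inputs I will use.

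Next I would stratify by the toric rank $r\in\{0,1,2\}$ of the connected N\'eron special fibre of $A$ --- equivalently $r=\mathrm{rk}\,N$ for the monodromy operator $N$ on $H^1(\bar A,\QL)$, where $N^2=0$, while on $H^2(\bar A,\QL)\cong\wedge^2 H^1(\bar A,\QL)$ the index of nilpotency of $N$ is $1$, $2$, $3$ respectively. By Raynaud's theorem $A^{\mathrm{an}}\cong\widetilde A^{\mathrm{an}}/M$ with $\widetilde A$ an extension of an abelian variety $B$ of dimension $2-r$ by $\G_m^{r}$ and $M\cong\ZZ^r$, and Mumford's construction builds a semi-stable model from an $M$-periodic rational polyhedral decomposition of $\R^r$, whose dual complex is $\R^r/M\cong(S^1)^r$. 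For $r=0$: $N=0$, there are no double curves, $\A_p$ is the abelian surface, the dual graph a point --- Type 1. For $r=1$: the uniformisation exhibits $A$ as a $\ZZ$-quotient of an extension of an elliptic curve $B$ by $\G_m$, and the Mumford model over $\R/M\cong S^1$ is a cycle of $\CP^1$-bundles over $B$, all isomorphic and meeting transversally along copies of $B$; this is consistent with the local picture (every double curve elliptic, no triple points, every vertex of the dual graph of degree $2$) --- Type 2. For $r=2$: the uniformisation is by $\G_m^{2}/M$ with $M\cong\ZZ^2$, the dual complex is $\R^2/M$, a $2$-torus, the components are the toric surfaces of the $2$-cells of the induced periodic decomposition, the double curves are the (rational) toric curves of the $1$-cells and the triple points the $0$-cells; feeding $K_{\A}=0$ and the triple point formula into the combinatorics forces, for the standard (hexagonal) periodic decomposition, each component to be a copy of $\CP^1\times\CP^1$ and each double locus to be a $-1$ hexagon --- Type 3.

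The serious obstacle is the rigidity claim behind Type 3. Distinct $M$-periodic polyhedral decompositions of $\R^2$ give genuinely non-isomorphic semi-stable models, all with $K_{\A}=0$, so the assertion that Type 3 has a single geometric form must be read either up to the modifications relating semi-stable models or relative to the particular decomposition singled out by the polarisation; proving that an arbitrary Type 3 model can be transformed, by blow-ups and blow-downs preserving strict semi-stability and $K_{\A}=0$, into the standard one is the surface analogue of Kulikov's theorem and is the technical heart of the proof. It requires analysing the elementary modifications relating semi-stable models --- flops of $(-1)$-curves lying in the components, and their effect on the anticanonical pairs $(Y_i,D_i)$ and on the induced decomposition of the torus --- and showing that finitely many of them suffice to reach standard form, so that $\A_p$ itself, not merely its dual graph, is determined up to the expected ambiguity.
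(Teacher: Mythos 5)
The paper offers no proof of this theorem at all: it is quoted from \cite{Fr-Mo} (Friedman--Morrison), so there is nothing internal to compare your argument against. Taken on its own terms, your sketch assembles the right local ingredients correctly --- $K_{Y_i}+D_i\sim 0$ from adjunction and $K_{\A}\equiv 0$, the computation $2g(E)-2=-t_E$ forcing each double curve to be either elliptic with no triple points or rational with exactly two, the triple point formula, and the trichotomy by toric rank $r\in\{0,1,2\}$ matched to the nilpotency index of $N$ on $H^2$. The identification of the dual complex with $(S^1)^r$ via Raynaud--Mumford uniformisation is also the right mechanism over a $p$-adic field (arguably more apt here than the complex-analytic Clemens--Persson--Pinkham route that \cite{Fr-Mo} actually follows). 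But as a proof it has two genuine gaps. First, the one you name yourself: you only verify that the \emph{standard} Mumford model built from the hexagonal periodic decomposition has the stated form; you do not show that an arbitrary strictly semi-stable model with $K_{\A}=0$ can be brought to that form by modifications, nor that every such model arises from \emph{some} periodic decomposition. That reduction is the actual content of the theorem in Types 2 and 3, and flagging it as ``the technical heart'' without an argument leaves the statement unproved. Second, and prior to that, the existence of a proper model with $K_{\A}\equiv 0$ is asserted (``after blow-ups and blow-downs one may assume...''); over $\C$ this is Kulikov's hardest step, and in mixed characteristic it is not available off the shelf, so ``for the N\'eron model this holds on the nose'' does not suffice --- the N\'eron model is not proper and its special fibre is not the union of surfaces in the statement.

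A separate point worth recording: the combinatorial output you claim for Type 3 --- each component isomorphic to $\CP^1\times\CP^1$ \emph{and} each double curve a hexagon of $(-1)$-curves --- is internally inconsistent, because $\CP^1\times\CP^1$ contains no $(-1)$-curves and its anticanonical cycles are squares of self-intersection-$0$ curves (these arise from the square, not hexagonal, periodic decomposition). The anticanonical hexagon of $(-1)$-curves lives on the degree-$6$ del Pezzo surface, i.e.\ $\CP^2$ blown up at the three vertices of a triangle, which is exactly the surface $\HCP^2$ the paper itself uses when it returns to Type 3 degenerations later on. You have faithfully reproduced an error in the quoted statement; a correct proof along your lines would surface this discrepancy rather than derive both claims simultaneously.
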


\subsection{The group $PCH^1(\A_p)$.}

We want to study the boundary of map
$$CH^2(A,1)\otimes \Q \stackrel{\partial}{\longrightarrow} PCH^1(\A_p)\otimes \Q$$
In this case the target space is 
$$PCH^{1}(\A_p)=\frac{Ker(i^*i_*:CH_{1}(Y^{(1)})
\rightarrow CH^{2}(Y^{(1)}))}{Im(\gamma:CH_{1}(Y^{(2)})
\rightarrow CH_{1}(Y^{(1)}))}\otimes \Q$$
with dimension 
$$\dim_{\Q} PCH^{1}(\A_p)=-\ord_{s=1} L_v(H^{2}(A),s):=d_v$$
We have to study each case separately. 

\subsubsection{Type 1 degenerations of Abelian surfaces}

This is the case when the special fibre is a smooth  abelian surface and  was  studied in \cite{sree4}. Here the target space of the boundary map is simply $CH^1(\A_p)\otimes \Q$ and the rank of this space is at least $2$. We showed that there exists a new element of the higher Chow group of $A$ for each new element of $CH^1(\A_p)$.  The argument here is quite subtle and uses deformation theory. This includes the case when the special fibre is a product of two elliptic curves.

\subsubsection{Type 2 degenerations of Abelian surfaces}

In this case the special fibre is a cycle of elliptic ruled surfaces $Y_i$. These surfaces intersect at elliptic curves 
$$Y_i \cap Y_j=\begin{cases} E_{ij} \text{ an elliptic curve } & j=i\pm 1\\
\emptyset & |j-i|>1 \end{cases}
$$
which are the bases of the elliptic ruled surfaces. All the elliptic curves are isomorphic. Here we have 
$$Y^{(j)}=\begin{cases}\A_p & j=0\\
 \bigsqcup_i  Y_i & j=1\\
\bigsqcup_{i} Y_{i} \cap Y_{(i+1)}=\bigsqcup_i E_{i(i+1)} & j=2 \\
\emptyset & j>2\end{cases}$$
As the conjectures upon which it is conditional hold in this case, we can apply Consani's theorem. Hence the dimension of the group $PCH^1(\A_p) \otimes \Q$ is the order of vanishing of the local $L$-factor at $v$ of the $L$-function of $H^2(A)$. This  dimension can be computed  using the analogue of the Clemens-Schmidt exact sequence due to \cite{bgs} and turns out to be $2$. Hence we need to construct $2$ higher Chow cycles. 

One of them can be constructed as follows.  If $\D$ is a cycle in $CH_1(\A)$ then the restriction of $\D$ to $\A_p$, $\D_p$ lies in $PCH^1(\A_p)$. This is because 
$$i^*i_*(\D_p)=i^*(\D)$$
and 
$$i^*(\D)=\oplus_j (\D \cap \div(\pi))|_{Y_j}.$$
so $\D \cap \div(\pi)$ is the divisor of the function $\pi$ restricted to $\D$  hence is $0$ in $CH^2(\A)$ and so maps to  $0$ in $CH^2(Y^{(1)})$. Hence the restriction of a generic cycle always lies in the group $PCH^1(\A_p)$.  This bounds one of the generators of the group $PCH^1(\A_p)$. The conjecture predicts that there is second element of the higher Chow group which bounds the other generator of this group. We will construct this cycle in the next section.

\subsubsection{Type 3 degenerations of Abelian surfaces}

In this case the individual components are $\CP^2$ blown up at the vertices of a triangle - which we will denote by $\HCP^2$. This results in a `-1 hexagon' where three of the six sides are the strict transforms of the edges of the triangle and the other three are the exceptional fibres. They are all (-1)-curves on $\HCP^2$ and if two components intersect,  they  intersect along one of these curves. 
$$Y^{(r)}=\begin{cases}\A_p & r=0\\
 \bigsqcup_i  Y_i= \bigsqcup \HCP^2_i & r=1\\
\bigsqcup_{i} Y_{i} \cap Y_{j}=\bigsqcup_{i,j} \CP^1_{i,j} & r=2 \\
\bigsqcup_{i,j,k} Y_{i,j,k}=\bigsqcup_{i,j,k} \CP^0_{i,j,k} & r=3\\
\emptyset & r>3
\end{cases}$$
Here one knows from the analogue of the Clemens-Schmidt exact sequence that the dimension is $3$.  Hence the conjecture predicts that there are three elements of the higher Chow group. One of them is the boundary of a decomposable element coming from the genus two curve on the generic fibre. We will show that there are at least two other elements which are linearly independent. 

\section{Higher Chow cycles}

\subsection{Collino's construction}

Collino \cite{coll} constructed a higher Chow cycle on a principally polarized  Abelian surface $A$ as follows.  Since $A$ is principally polarized, $A=Jac(C)$ 
where  $C$ be a genus $2$ curve. Let $P$ and $Q$ be two ramification points on $C$. There is a function $f$ on $C$ with divisor 
$$ \div(f)=2P-2Q$$
Let $C_P$ and $C_Q$ be the images of the curve $C$ under the maps $\iota_P$ and $\iota_Q$ where 
$$\iota_x(y)=y-x$$
and let $f_P$ and $f_Q$ be the function $f$ being thought of as a function on $C_P$ and $C_Q$ respectively. Then 
$$ \div(f_P)=2(0)-2(Q-P) \text{ and } \div(f_Q)=2(P-Q)-2(0)$$
Since $P-Q$ is a two torsion point on the Abelian surface, $P-Q=Q-P$. Hence the element 
$$\Xi_{P,Q}=(C_P,f_P)+(C_Q,f_Q)$$
satisfies the co-cycle condition 
$$\div(f_P)+\div(f_Q)=0$$
hence is an element of the higher Chow group $CH^2(A,1)$.

\subsection{Surjectivity}

Our conjecture states that the boundary map in the localization sequence is surjective. We show that the element of Collino's described above, with suitable choices of points $P$ and $Q$, suffices to show surjectivity in the cases when the special fibre of the Abelian surface is singular, as well as in the case when the special fibre is a product of elliptic curves. 

We do this by computing the boundary of the element in terms of the components of the regular minimal model of the curve $C$. For that we need the theorems of Parshin \cite{pars} on minimal models of genus $2$ curves.  In all the cases, the computation of the boundary is done as follows. Suppose the special fibre 
$$\CC_p=\bigcup_{i} X_i.$$
Then 
$$\div(\bar{f})={\mathcal H}+ \sum_i a_i X_i$$
where $\barf$ is the function $f$ on the closure $\CC$ and  ${\mathcal H}$ is the horizontal divisor $\overline{\div(f)}$. To compute the $a_j$ we do the following. We know that the decomposable element $(C,p^k)$ has boundary $\div(p^k)=k \CC_p=k(\sum_i  X_i)$. Hence 
	$$\div(\bar{f})-\div(p^{a_j})={\mathcal H}+\sum_i (a_i-a_j)X_i$$ 
	and in particular, $X_j$ is not in the support. The degree of a divisor of a function on a curve on an algebraic surface which is not contained in the support is $0$. Hence restricting this to $X_j$ gives us an equation 
	$$({\mathcal H}.X_j)+ \sum_i (a_i-a_j)(X_i.X_j)=0$$
	Using that and what we know about the intersection numbers $(X_i.X_j)$ gives us a linear equation among the $a_i$ not including $a_j$
	
	However, we can simplify our calculations using the following observation. If $X$ is a component of the special fibre $\CC_p$ then $(X.\CC_p)=0$. So equivalently we have the equation 
	$$({\mathcal H}.X_j)+ \sum_i a_i(X_i.X_j)=0$$
	though here we have to use what we know abou the self intersection $(X_j.X_j)$.
	
	 Repeating this with the different components gives  us a system of simultaneous equations in the $a_i$ which we can solve quite easily. We get as many equations as components this way and so the space of solutions is one dimensional. Sometimes it is convenient to make a choice of the coefficient of one of the components in order to get a `nice' description of the boundary. 
	 
From the N\'{e}ron mapping property the map $\iota_x:C \rightarrow A$ extends to a map, which we will also denote by  $\iota_x$, 
$$\iota_x:\CC^{ns} \rightarrow   \A^0$$
 where $\CC^{ns}$ is the curve $\CC$ with the singular points removed and $\A^0$ is the N\'{e}ron model of the Jacobian.  The special fibre of the N\'{e}ron model of the Jacobian is the group  $\A_p \backslash \bigcup_{i,j,i \neq j} (Y_i \cap Y_j)$. where $\A_p=\bigcup Y_i$ is the special fibre of a minimal regular model with components $Y_i$.   Each component of the special fibre is an extension of an abelian variety by a power of $\G_m$ -- so in our case it is either an abelian surface or an extension of an elliptic curve $E$ by $\G_m$ or $\G_m \times \G_m$. 
We chose a particular component where the closure of the zero section lies and define that to be the identity component.  The set of components has the structure of  a finite abelian group.

The element in the higher Chow group that we consider is $\Xi_{P,Q}$. The boundary of the element  $\Xi_{P,Q}$ is the closure of 
$$\iota_P(\div(\barf)+\iota_Q(\div(\barf)$$ 
in the special fibre $\A_p$.  Since the horizontal cycles cancel, one has 
$$\partial(\Xi_{P,Q})=\sum_i  a_i (\iota_P(X_i) + \iota_Q(X_i))$$
The curves $\iota_P(X_i)$ and $\iota_Q(X_i)$ are linearly equivalent in $PCH^1(Y)$ hence the boundary is 
$$\partial(\Xi_{P,Q})=\sum_i  2 a_i (\iota_P(X_i) )$$
in $PCH^1(Y)\otimes \Q$.

Hence what we have to show is that for a suitable choice of $P$ and $Q$ one obtains a new cycle in $PCH^1(Y)$ and in the case of Type 3, we show that for different choices of $P$ and $Q$ we can get two new cycles. 

We now do a case by case analysis. There are seven cases of minimal regular models of genus two curves. In all that follows let $\CC$ be in the minimal regular model of a genus $2$ curve $C$ and $\CC_p$ the special fibre. We use the notation of \cite{pars}.  In the pictures, the bold lines correspond to the curves and the thin lines indicate where the Weierstrass points lie.

\begin{table}[h!]
	\begin{center}
		
		\label{tab:table1}
		\begin{tabular}{l|c|c} 
			\textbf{Case} & \textbf{Type of Jacobian} & \textbf{Rank of $PCH^1$}\\
		    \hline
			I & 1 & \ $\geq$ 2\\
			II & 2 & 2\\
			III & 3& 3\\
			IV & 1 & $\geq$ 3\\
			V & 2 & 2 \\
			VI & 3 & 3 \\
			VII & 3 & 3 \\
			\hline 
		\end{tabular}
	\end{center}
\end{table}

\subsubsection{Case I}
$$
\includegraphics[width=80mm]{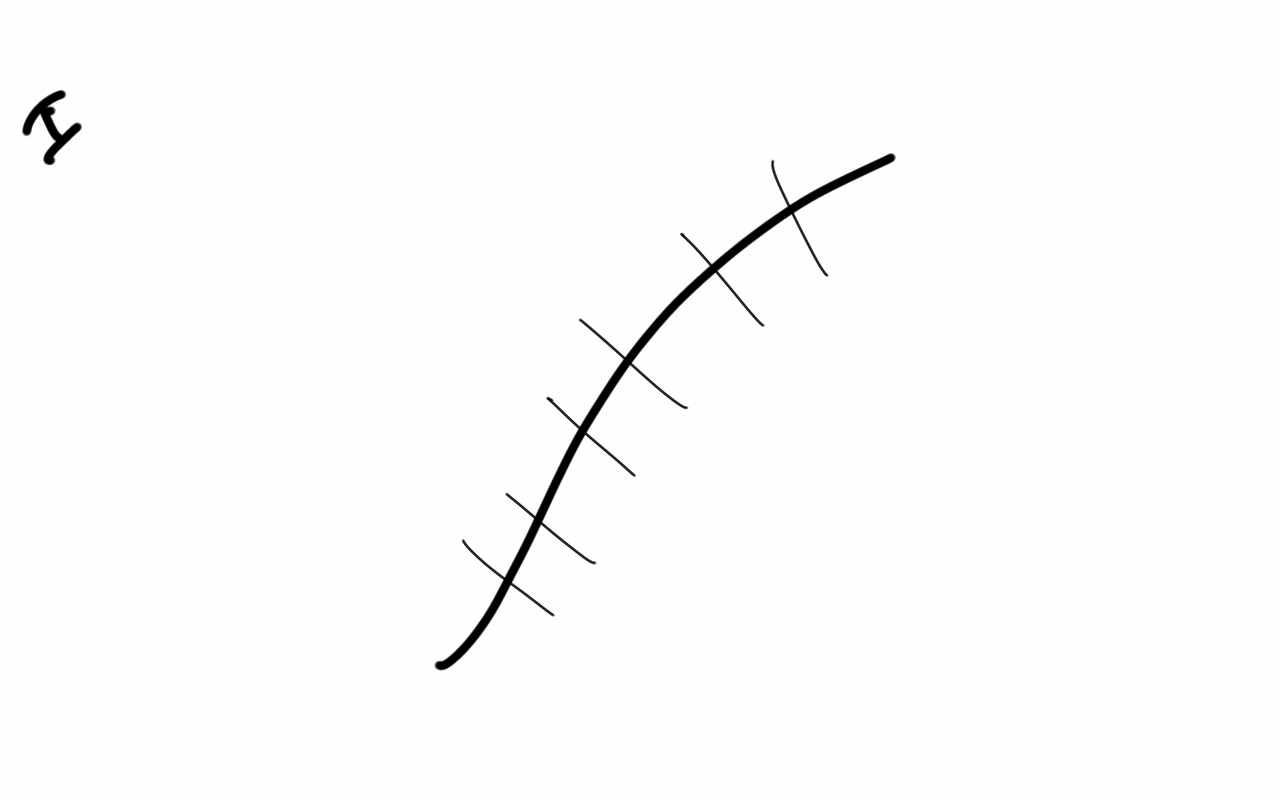}
$$
In this case the curve $\CC$ reduces to a smooth genus $2$ curve and the Jacobian is of Type 1. Since the special fibre has only one component one has 
$\div(\bar{f}_P)= \HH + a \CC_p$
Computing the intersection with $\CC_p$ shows that $a=0$. Hence Collino's cycle has no boundary here --- but the decomposable element can be used to bound $\CC_p$. However, the dimension of $PCH^1(Y)\otimes \Q=CH^1(Y)\otimes \Q$ is at least $2$ owing to the existance of the Frobenius endomorphism. The conjecture predicts there are at least {\em two} higher Chow cycles. Further, it is usually the case that the Picard number of the special fibre is strictly larger than that of the generic fibre. In those cases decomposable cycles will not suffice to prove surjectivity. Collino's cycle has boundary $0$ so does not work. Hence we have to find new indecomposable cycles. This is the content of \cite{sree4}. The idea there was to `deform' a rational curve corresponding to the extra cycle to construct a new element. Curiously this is the hardest case.

\subsubsection{Case II} 
$$
\includegraphics[width=80mm]{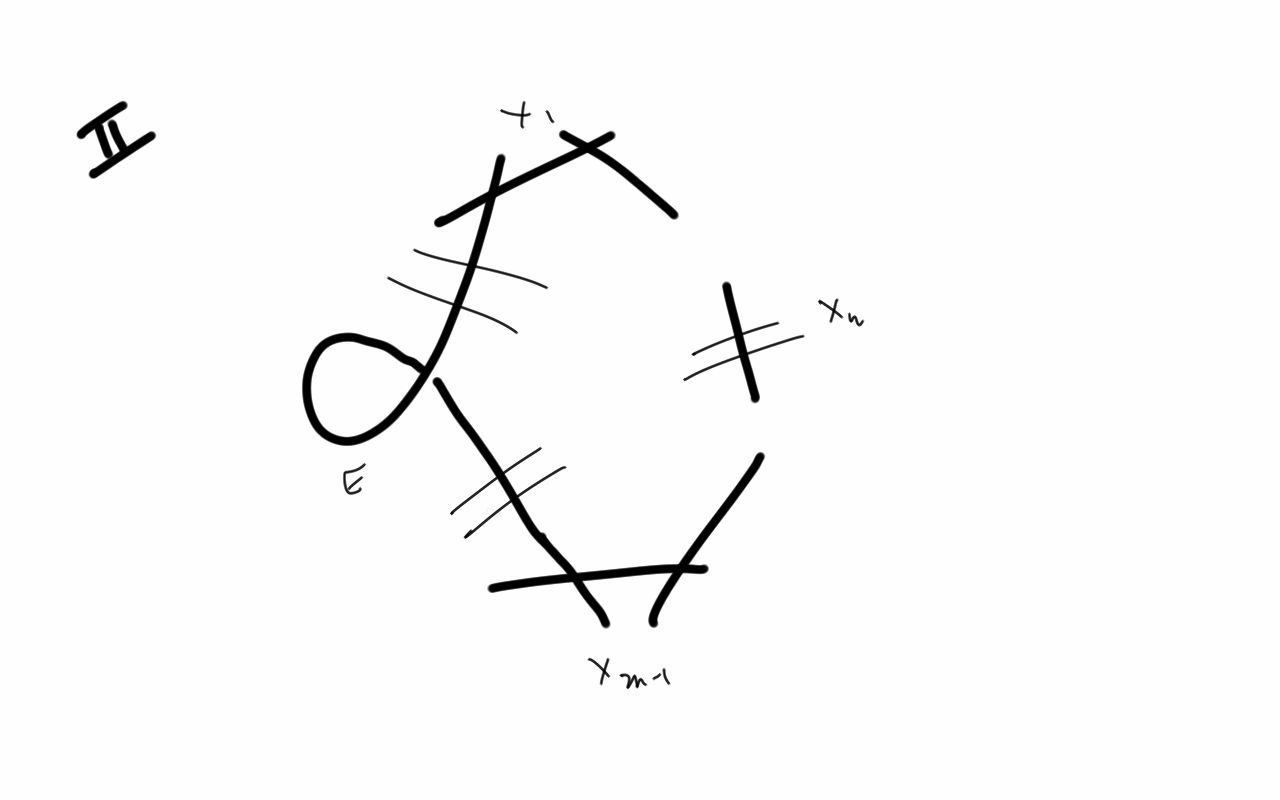}
$$
In this case the stable model of the curve is a genus $1$ curve with a node and the Jacobian is of Type 2. The special fibre $\CC_p$ of the regular minimal model consists of a genus $1$ curve $E$ and a chain of $(2n-1)$ rational curves $X_i$ meeting  $E$ at two points $\alpha$ and $\beta$,  where $n\geq 2$ is an integer. One has $X_i^2=-2=E^2$.  Finally, the closure of four  of the Weierstrass points meets $E$ and the remaining two  meet the middle component $X_{n}$. 

Choose $P$ and $Q$ such that $P$ meets $E$ and $Q$ meets $X_n$.  One has 
$$\div(\bar{f}_P)=\HH+aE+\sum_{i=1}^{2n-1} b_i X_i$$
To compute $a$ and $b_i$ we modify by a decomposable element and  intersect with $E$ and $X_i$. As we remarked we can simply consider the restriction  to $E$ or $X_i$. For reasons of symmetry one has 
$$b_i=b_{2n-i}$$
$$0=(\div(\barf_P).E) -=2-2a+b_1+b_{2n-1}=2-2a+2b_1  \Rightarrow   b_1=a-1$$
$$0=(\div(\barf_P).X_1)=a-2b_1+b_2 \Rightarrow b_2=b_1-1=a-1-1=a-2$$
continuing in this manner one can see that 
$$b_k=a-k\hspace{2cm}  1 \leq k \leq (n-1)$$
and by symmetry 
$$b_{2n-k}=b_k \hspace{2cm} 1 \leq k \leq (n-1)$$
Finally 
$$0=-2+2b_{n-1}-2b_n= -2+2(a-(n-1))-2b_n     \Rightarrow b_n=a-n$$
So one has 
$$\div(\barf_P)=\HH+aE- \left( \sum_{i=1}^{n-1} (i-a)(X_i+X_{2n-i}) + (n-a)X_n\right )$$
A similar calculation shows that 
$$\div(\barf_Q)=-\HH+aE-\left( \sum_{i=1}^{n-1} (i-a)(X_i+X_{2n-i})  + (n-a)X_n\right)$$
so combining these two, the boundary of the element $\Xi_{P,Q}$ in the N\'{e}ron special fibre is 
$$\partial (\Xi_{P,Q})=\iota_P(\div(\barf_P))+\iota_Q(\div(\barf_Q))=2 a\iota_P(E)- 2\left( \sum_{i=1}^{n-1} (i-a)(\iota_P(X_i) + \iota_P(X_{2n-i}))) + (n-a)\iota_P(X_n) \right)$$
A different choice of Weierstrass points will either change sign, if  the roles of $P$ and $Q$ are reversed,  or  have boundary $0$, if $P$ and $Q$ lie on the same component. 

Each component of the  N\'{e}ron special fibre is a non-split  extension of $E$ by $\G_m$ and the group of components is isomorphic to $\ZZ/(2n-1)\ZZ$. Each $X_i$ is isomorphic to $\G_m$ and its closure in the special fibre of the degenerate Abelian surface is a $\CP^1$. The special fibre of the closure of the curve $C_P$ is a copy of $E$ in one component with a chain of $\CP^1$s meeting $E$ at two different points which are translates of each other. Hence the boundary  of the decomposable element $(C,p^a)$ is 
$$\partial((C,p^a))=a\CC_p = a  \left(\iota_P(E) + \sum_{i=1}^{2n-1} \iota_P(X_i) \right)$$
Adding twice this to our computation of the boundary of $\Xi_{P,Q}$ gives that, up to a decomposable element, the boundary of $\Xi_{P,Q}$ in $PCH^1(\A_p)) \otimes \Q$ is
$$\partial(\Xi_{P,Q})=- 2\left( \sum_{i=1}^{n-1} i (\iota_P(X_i) + \iota_P(X_{2n-i})) + n ( \iota_P(X_n)) \right).$$
This can be seen to be non-zero by intersecting with $E$, for instance. In particular, it is not a multiple of $\CC_p$. Hence the two elements $C_p$ and $\partial(\Xi_{P,Q})$  are linearly independent and therefore generate $PCH^1(Y)\otimes \Q$.

\subsubsection{Case III}
$$
\includegraphics[width=80mm]{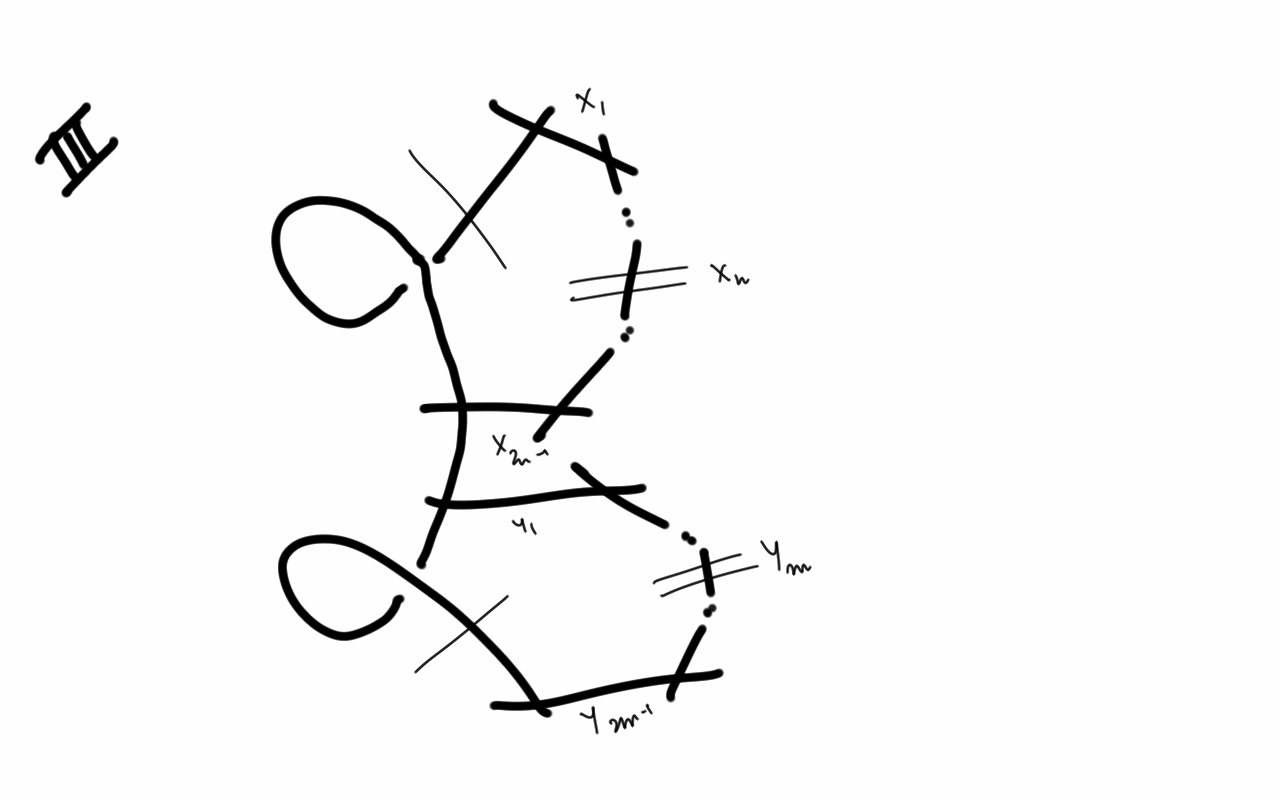}
$$
In this case the stable model is a genus $0$ curve with two nodes and the Jacobian is of Type 3. This can be viewed as the case when the elliptic curve in Case II degenerates to a nodal curve. Here the special fibre $\CC_p$ of the regular minimal model consists of a genus $0$ curve $B$ and two chains of rational curves $X_i, 1 \leq i \leq 2n-1$ and $Y_j, 1 \leq j \leq 2m-1$. The closures of two of the Weierstrass points meet   $B$ as well  as  $X_n$ and $Y_m$. One has $X_i^2=Y_j^2=-2$ and $B^2=-4$.   

In this case there are essentially three different elements we can construct. Suppose $P$, $Q$ and $R$ are three Weierstrass points  whose closures lie on $B$, $X_n$ and $Y_m$ respectively, then one has the elements $\Xi_{P,Q}$, $\Xi_{P,R}$ and $\Xi_{Q,R}$. However, it is easy to see that $\Xi_{P,Q}-\Xi_{P,R} = \Xi_{Q,R}$ in $CH^2(X,1)$ as they differ by the tame symbol of a pair of functions.  

Suppose $P$ lies on $B$ and $Q$ lies on $X_n$. Then an analysis similar to what is done above shows, up to the boundary of a decomposable element,
$$\partial(\Xi_{P,Q})=2a \left(B+\sum_{j=1}^{2m-1} Y_j\right) - 2 \left( \sum_{i=1}^{n-1} (i-a) (X_i+X_{2n-i}) + (n-a)X_n \right)$$
Similarly, if $P$ lies on $B$ and $R$ lies on $Y_m$ one has 
$$\partial(\Xi_{P,R})=2a\left(B+\sum_{i=1}^{2n-1} X_i\right) - 2 \left( \sum_{j=1}^{m-1} (j-a) (Y_j+Y_{2m-j}) + (m-a)Y_m \right)$$
The boundary of a decomposable element $(C,p^a)$ is, like before, 
$$\partial((C,p^a))=a \left(B+\sum_{i=1}^{2n-1} X_i + \sum_{j=1}^{2m-1} Y_i \right).$$
Intersecting with $X_n$ and $Y_m$, for instance, shows that the boundaries are linearly independent and  hence they generate the group $PCH^1(Y)\otimes \Q$. So once again, the boundary map is surjective.

\subsubsection{Case IV}
$$
\includegraphics[width=80mm]{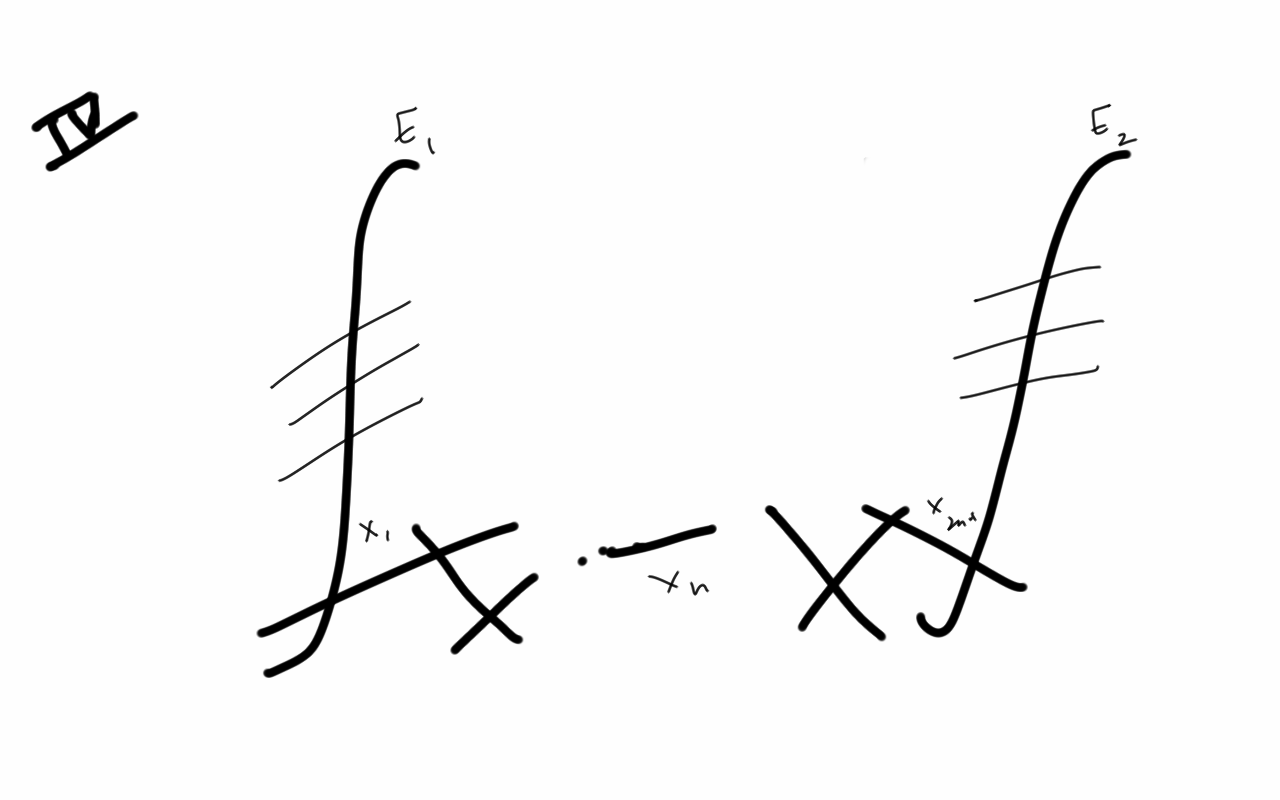}
$$
In this case the stable model is a union of two elliptic curves meeting at a point and the closures of three of the Weierstrass points lie on each elliptic curve. 
Here the Jacobian is smooth, hence is of Type 1. The regular minimal model consists of the two elliptic curves along with a chain of rational curves joining them. The elliptic curves $E_i$ satisfy $E_i^2=-1$ while the rational curves $X_j, 1 \leq j \leq r$  satisfy $X_j^2=-2$.

This was studied by Spiess \cite{spie} when the generic fibre product of elliptic curves as well.  While he constructed a particular element using an irreducible genus $2$ curve and two elliptic curves in the generic fibre, in fact one can use the element $\Xi_{P,Q}$ constructed above, with $P$ and $Q$ being chosen such that their closures lie on different components. Then if the divisor of $\bar{f}_P$  is 
$$\div(\bar{f}_P)=b_0 E_1+\sum_{i=1}^{r}  b_i X_i + b_{r+1} E_2$$
using a calculation  similar to that that above shows 
$$b_{i}=b_{0}-2i.$$
A convenient choice of $b_0$ is $r+1$ as in that case we have $b_i=(r+1-2i)=-b_{r+1-i}$ and the boundary is 
$$\partial(\Xi_{P,Q})=2(r+1) (E_1 - E_2 ) + \sum_{i=1}^{\lfloor \frac{r+1}{2}\rfloor} 2(r+1-2i) (X_i-X_{r+1-i}) $$
where $\lfloor a \rfloor$ denotes the greatest integer less than or equal to $a$.  Under the map to the Jacobian the $X_i's$ map to a point and so the cycle maps to $2(r+1)(E_1-E_2)$ which is clearly not a multiple of $E_1+E_2$.  Hence the boundary of the decomposable cycle, along with this cycle, generate the group $PCH^1(C_p)\otimes \Q$.

\subsubsection{Case V}
$$
\includegraphics[width=80mm]{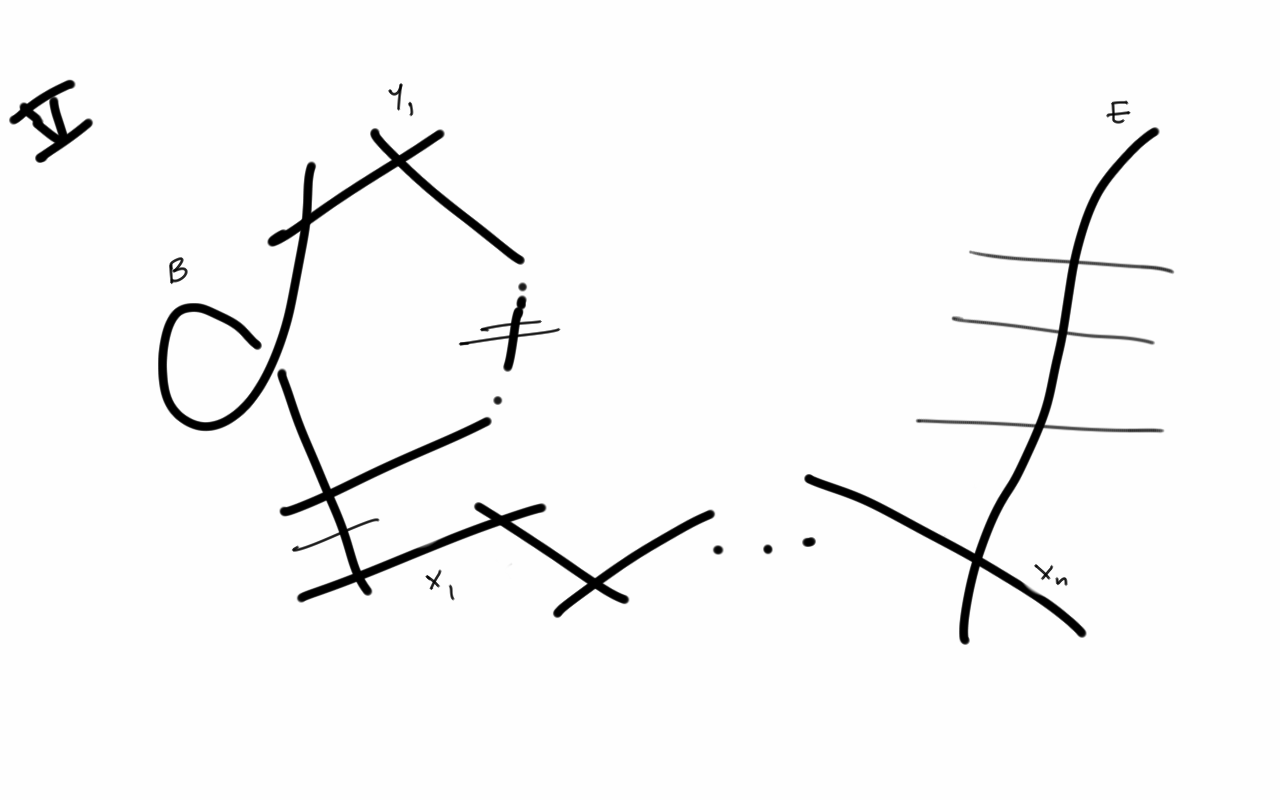}
$$
In this case the stable model is a union on an elliptic curve with a nodal rational curve and the Jacobian is of Type 2. The minimal regular model consists of the elliptic curve $E$ along with a chain of rational curves $X_i$ satisfying $X_i^2=-2$ linking $E$ with a rational curve $B$. One has $B^2=-3$ and $E^2=-1$.  Further, there is a chain of rational curves $Y_j, 1 \leq j \leq 2m-1$ linking two points of $B$ with $Y_j^2=-2$. This is essentially the case when one the elliptic curves in Case $IV$ degenerates to a nodal rational curve and corresponds to the case when the extension class of the elliptic surface is trivial - the surface is a product $E \times \CP^1$. 

Here three of the closures of the Weierstrass points lie on $E$, one the points lies on $B$ and finally two lie on $Y_m$.  We chose $P$ and $Q$ such that the closure of $P$ lies on $E$ and the closure of $Q$ lies on $B$. Calculating as before suppose 
$$\div(\barf_P)=\HH+aE+\sum_{i=1}^{r} b_iX_i + cB+\sum_{j=1}^{s} d_jY_j$$
one has $b_i=a-2i$, $c=a-2(r+1)$ and $d_j=a-2(r+1)$ for all $j$ where we use the fact that by symmetry $d_j=d_{s+1-j}$. So the boundary is 
$$\partial(\Xi_{P,Q})=4aE - 2\left(\sum_{i=1}^{r}  (2i-a))X_i + (2(r+1)-a)\left(B + \sum_{j=1}^{s} Y_j \right)\right)  $$

There is another element we can consider -- when $Q$ meets the component $Y_m$ instead of $B$ -- but a similar calculation shows that  the boundary is the same. If $P$ lies on $B$ and $Q$ lies on $Y_m$ or any such combination, the boundary can be seen to be $0$. 

\subsubsection{Case VI} 
$$
\includegraphics[width=80mm]{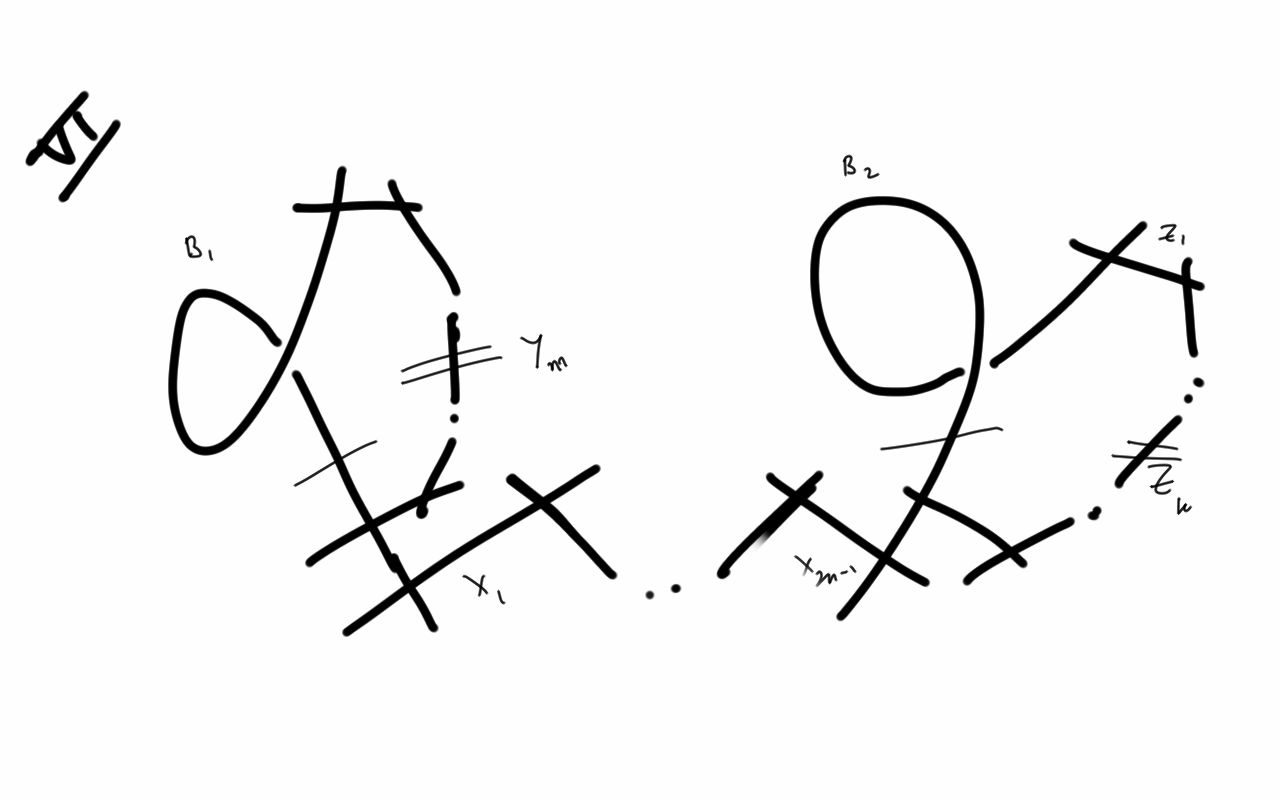}
$$
In this case the stable model is the union of two nodal rational curves meeting at a point. The minimal regular model consists of two rational curves $B_1$ and $B_2$ each with a chain of rational curves $Y_j, 1\leq j  \leq 2n-1$ and $Z_k, 1 \leq k \leq 2m-1$. The curves $B_i$ are also linked by a chain of rational curves $X_i, 1\leq i \leq s$. Finally, $X_i^2=Y_j^2=Z_k^2=-2$ and $B_1^2=B_2^2=-3$. A pair of the closures of the Weierstrass points meet the curves $Y_n$ and $Z_m$ each and the remaining two meet $B_1$ and $B_2$.

This is the case when both the  elliptic curves in Case IV degenerate to nodal rational curves. We studied this in \cite{sree2} - when the generic fibre was assumed to be a product of two non-isogenous elliptic curves. However, one can use the element above to prove surjectivity in more generality.  Here the Jacobian is of Type 3 so  one expects $2$ new elements. 

To get the first we choose $P$ and $Q$ such that their closures lie on $B_1$ and $B_2$ respectively. As before one has 
$$\div(\barf_P)=b_1B_1+ \sum_{i=1}^{s}  c_i X_i + \sum_{j=1}^{2n-1}  d_j Y_j + \sum_{k=1}^{2m-1}  e_k Z_k +b_2 B_2$$
Calculating as before, we have
$$-3b_1+c_1+d_1+d_{2n-1}+2=0$$
$$b_1-2d_1+d_2=0$$
In general we have 
$$d_{j+1}=2d_j-d_{j-1} \hspace{1in}    2 \leq j \leq 2n-2.$$
Hence, adding a decomposable element so that $b_1=0$, we see that $d_i=0$ for all $i$ and $c_1=-2$.  Further calculation shows that
 $$c_{i+1}=2c_i-c_{i-1} \hspace{1in}  2 \leq i \leq s-1$$ 
 Using this we have $c_i=-2i$. We also have  
 $$b_2-2c_s+c_{s-1}=0$$ 
 which shows that $b_2=-2(s+1)$. Finally, using symmetry to say that $e_i=e_{2m-i}$, we have
$$-3b_2+c_s+2e_1 -2=0$$
$$e_{k-1}-2e_k+e_{k+1}=0 \hspace{1in} 1<k<2m-1$$
$$e_{2m-2}-2e_{2m-1}+b_2=0$$
which shows that $e_k=-2(s+1)$ for all $k$. Hence the divisor is 
$$\div(\barf_P)=\sum_{i=1}^{s} (-2i) X_i -2(s+1) \left(B_2+\sum_{k=1}^{2m-1} Z_k\right)$$
Finally,  adding a decomposable element to get a more symmetric expression we see that 
$$\partial (\Xi_{P,Q})=2\left(( s+1) (B_1+\sum_{j=1}^{2n-1} Y_j) \right)  + 2 \left( \sum_{i=1}^{\lfloor \frac{s+1}{2}\rfloor} (s+1-2i) (X_i-X_{s+1-i})\right)  - 2 \left((s+1) (B_2+\sum_{k=1}^{2m-1} Z_k)\right)$$ 

To get the second new element we choose $P$ and $Q$ such that the closure of $P$ lies on $B_1$ as before but the closure of $Q$ lies on $Z_{m}$. As before we can assume $b_1=0$ and the same calculation as above holds to show $d_j=0$ and $c_i=-2i$ and $b_2=-2(s+1)$. 

The first difference is that we have 
$$c_{s}-3b_2+2e_1=0$$ 
hence 
$$e_1=-2(s+1)-1.$$
Using
$$b_2-2e_1+e_2=0$$ 
$$e_{k+1}=2e_{k}-e_{k-1} \hspace{1in} 1\leq k \leq m-1$$
$$e_{m-k}=e_{m+k} \hspace{1in} 1\leq k \leq m-1$$
shows $e_k=-2(s+1)-k$ for $1\leq k \leq m$ and so one has 
$$\div(\barf_{P})=\sum_{i=1}^{s} -2i X_i -2(s+1) B_2 -\left (\sum_{k=1}^{m-1} (2(s+1)+k) (Z_k+Z_{m+k} )+(2(s+1)+m) Z_m \right)$$

\subsubsection{Case VII}
$$
\includegraphics[width=80mm]{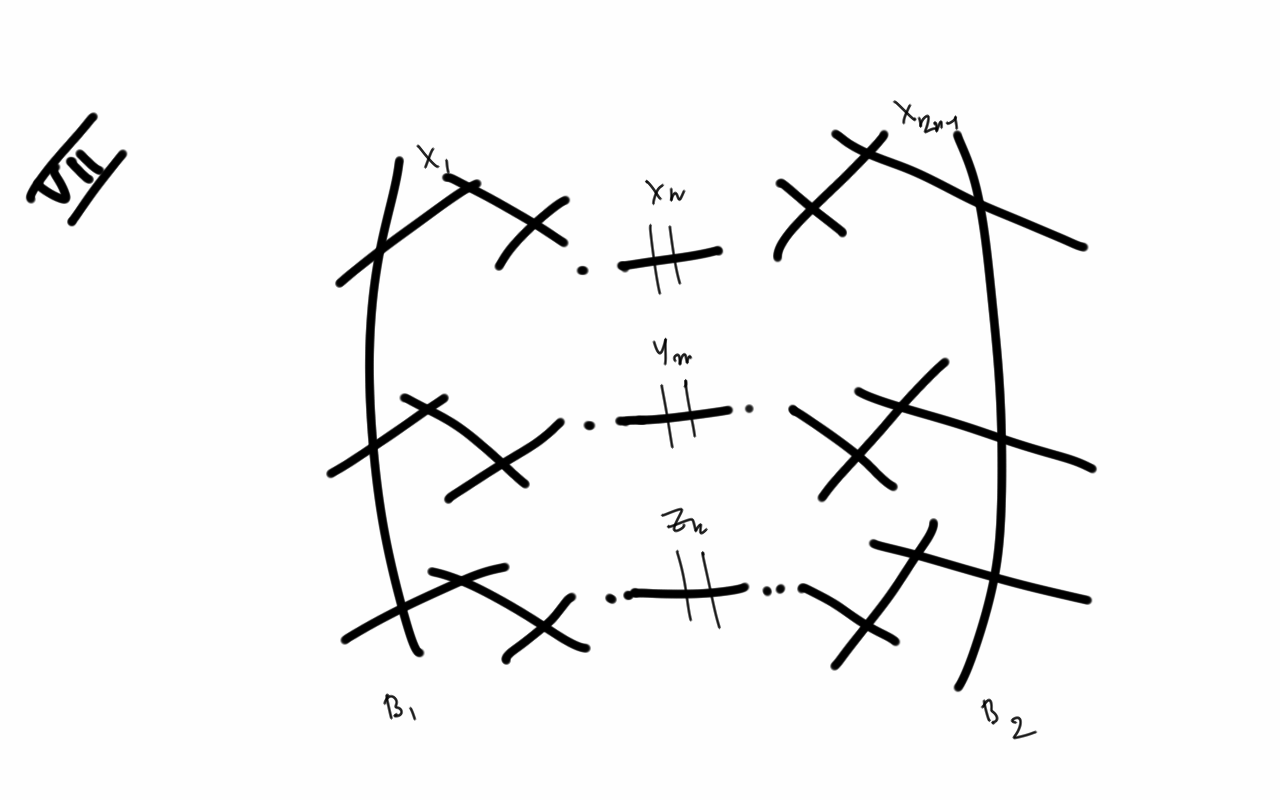}
$$
In this case the stable model of the  special fibre consists of two smooth rational curves $B_1$ and $B_2$ meeting at three points. The minimal regular model consists of the two smooth curves along with three chains of rational curves $X_i$ $1\leq i \leq 2r-1$, $Y_j$ $1 \leq j \leq 2s-1$ and  $Z_k$, $1 \leq k \leq 2t-1$ lying over the three points of intersection.  A pair each of the  closures of the Weierstrass points meet the curves $X_r$, $Y_s$ and $Z_t$. One has $B_i^2=-3$ and $X_i^2=Y_j^2=Z_k^2=-2$. 

Once again the Jacobian is of Type 3 and so one expects two new elements. To get the first we choose Weierstrass points $P$, $Q$ and $R$ such that the closure of $P$ lies on $X_r$, the closure of $Q$ lies on $Y_s$ and the closure of $R$ lies on $Z_t$. Then, if $f_P$ is as before, we have  
$$\div(\barf_P)=b_1B_1+b_2B_2+\sum a_i X_i + \sum c_j Y_j + \sum d_k Z_k$$ 
Assume $b_1=0$. Then one has $a_1+c_1+d_1=0$. Intersecting the divisor  with $X_1$ shows that  $b_1-2a_1+a_2=0$. Continuing in this manner, intersecting with the $X_i$s one gets 
$$a_i=\begin{cases} ia_1 & 1\leq i \leq r\\ ia_1-2(i-r) & r \leq i \leq 2r-1 \end{cases}$$
$$b_2=2ra_1-2r$$
Similarly, using $Y_j$s one gets 
$$c_j=\begin{cases} jc_1 & 1\leq j \leq s\\
jc_1+2(j-s) &  s \leq j \leq 2s-1\end{cases}$$
$$b_2=2sc_1+2s$$
and using $Z_k$s one has 
$$d_k=kd_1 \hspace{\baselineskip} 1\leq i \leq 2k-1$$
$$b_2=2kd_1$$
Solving this system of simultaneous equations shows $a_1=1, c_1=-1$ and $b_1=b_2=d_1=0$. Hence one has 
$$\partial (\Xi_{P,Q}) =2 \left( rX_r  + \sum_{i=1}^{r-1} i(X_i+X_{2r-i})  \right) - 2\left(  sY_s+ \sum_{j=1}^{s-1}  j(Y_j+Y_{2s-j}) \right)$$
A similar calculation works for the elements $\Xi_{Q,R}$ and $\Xi_{P,R}$. Clearly one has 
$$\Xi_{P,R}=\Xi_{P,Q}+\Xi_{Q,R}$$
hence there are essentially two elements one can construct in this manner. 

To check that these elements are lineary independent we intersect with $X_r$, $Y_s$ and $Z_t$. $(\partial (\Xi_{P,Q}).X_r)=-2$ while $(X_r.\partial \Xi_{Q,R})=0$ hence they are not linearly equivalent. Further $\partial(\Xi_{P,Q}) \neq 0$. Similarly, intersecting with $Y_s$ shows that $\partial (\Xi_{Q,R}) \neq 0$.

\bibliographystyle{alpha}
\bibliography{referencesABSS}

\noindent Ramesh Sreekantan\\ 
Statistics and Mathematics Unit\\
Indian Statistical Institute\\
$8^{th}$ Mile, Mysore Road\\
Jnanabharathi, Bangalore, 560 059\\
Karnataka, India \\
\\
Email: rameshsreekantan\@@gmail.com

\end{document}